\definecolor{grn}{rgb}{0,0.6,0}
\definecolor{mrn}{rgb}{0.3,0,0}
\definecolor{blue}{rgb}{0,0,0.7}
\definecolor{Mygray}{rgb}{0.75,0.75,0.75}
\definecolor{auburn}{rgb}{0.43, 0.21, 0.1}
\definecolor{britishracinggreen}{rgb}{0.0, 0.26, 0.15}
\definecolor{taupe}{rgb}{0.28, 0.24, 0.2}
\newtheorem{theorem}{Theorem}
\theoremstyle{definition}
\newtheorem{defn}{Definition}
\newtheorem{question}{Question}
\newtheorem{rmk}{Remark}
\begin{document}
\baselineskip=14.5pt
\title[Denseness of direction and generalized direction sets]{On denseness of certain direction and generalized direction sets}

\author{Deepa Antony, Rupam Barman and Jaitra Chattopadhyay}
\address[Deepa Antony, Rupam Barman and Jaitra Chattopadhyay]{Department of Mathematics, Indian Institute of Technology, Guwahati, Guwahati - 781039, Assam, India}
\email[Deepa Antony]{deepa172123009@iitg.ac.in}

\email[Rupam Barman]{rupam@iitg.ac.in}

\email[Jaitra Chattopadhyay]{jaitra@iitg.ac.in}

\begin{abstract}
Direction sets, recently introduced by Leonetti and Sanna, are generalization of ratio sets of subsets of positive integers. In this article, we generalize the notion of direction sets and define {\it $k$-generalized direction sets} and {\it distinct $k$-generalized direction sets} for subsets of positive integers. We prove a necessary condition for a subset of $\mathcal{S}^{k - 1} := \{\underline{x} \in [0,1]^{k} : ||\underline{x}|| = 1\}$ to be realized as the set of accumulation points of a distinct $k$-generalized direction set. We provide sufficient conditions for some particular subsets of positive integers so that the corresponding $k$-generalized direction sets are dense in $\mathcal{S}^{k - 1}$. We also consider the denseness properties of certain direction sets and give a partial answer to a question posed by Leonetti and Sanna. Finally we consider a similar question in the framework of an algebraic number field.
\end{abstract}

\renewcommand{\thefootnote}{}

\footnote{2020 \emph{Mathematics Subject Classification}: Primary 11A41, 11B05.}

\footnote{\emph{Key words and phrases}: Accumulation points, ratio sets.}


\renewcommand{\thefootnote}{\arabic{footnote}}
\setcounter{footnote}{0}

\maketitle

\section{Introduction and statements of results}

For a non-empty set $A \subseteq \mathbb{N}$, the {\it ratio set} of $A$ is defined by $R(A) := \{\frac{a}{b} \in \mathbb{Q} : a,b \in A\}$. One of the most fundamental results in real analysis, viz. $\mathbb{Q}$ is dense in $\mathbb{R}$, when rephrased in terms of ratio sets, reads as the ratio set of $\mathbb{N}$ is dense in $\mathbb{R}_{> 0}$. This reformulation of the denseness of $\mathbb{Q}$ in $\mathbb{R}$ has spurred a lot of research in recent times. In particular, the classification of subsets of $\mathbb{N}$ having dense ratio sets in $\mathbb{R}_{> 0}$ has been a central question of investigation. In what follows, we say that $A$ is {\it fractionally dense} in $\mathbb{R}_{> 0}$ if $R(A)$ is dense in $\mathbb{R}_{> 0}$.

\smallskip

One of the most natural choices for $A$ is the set $\mathbb{P}$ of prime numbers and it is known to be fractionally dense (cf. \cite{hs}, \cite{Salat}). Several generalizations of this result have been proven over the years and several interesting subsets of natural numbers have been shown to be fractionally dense (cf. \cite{gems} - \cite{Bukor-Toth 2}, \cite{dense-Gauss}, \cite{Dio} - \cite{hs}, \cite{Salat} - \cite{Salat3}, \cite{ps} - \cite{Toth2}). In \cite{CRS}, \cite{dense-Gauss} and \cite{Sittinger}, analogous questions have been dealt with in the set up of algebraic number fields. Very recently, the denseness of ratio sets in the $p$-adic completion $\mathbb{Q}_{p}$ have also been considered (cf. \cite{AB}, \cite{ABM}, \cite{Luca1}, \cite{Luca2}, \cite{Sanna2}, \cite{Sanna}).

\smallskip

Very recently, Leonetti and Sanna \cite{BAMS} introduced the notion of {\it direction sets}, which generalizes the notion of ratio sets as follows. For an integer $k \geq 2$ and $\emptyset \neq A \subseteq \mathbb{N}$, they considered the following sets: 
\begin{align*}
\mathcal{S}^{k - 1} := \{\underline{x} \in [0,1]^{k} : ||\underline{x}|| = 1\}, ~ \mathcal{D}^{k}(A) := \{\rho(\underline{a}) : \underline{a} \in A^{k}\} ~ \mbox{ and } ~ \mathcal{D}^{\underline{k}}(A) := \{\rho(\underline{a}) : \underline{a} \in A^{\underline{k}}\},
\end{align*}
where $\rho : \mathbb{R}_{\geq 0}^{k} \to \mathcal{S}^{k - 1}$ is the map defined by $\rho(\underline{x}) = \frac{\underline{x}}{||\underline{x}||}$ and $A^{\underline{k}} = \{\underline{a} \in A^{k} : a_{i} \neq a_{j} \mbox{ for all } i \neq j\}$. The sets $\mathcal{D}^{k}(A)$ and $\mathcal{D}^{\underline{k}}(A)$ are called the $k$-direction sets of $A$. We note that, for $k = 2$, we can identify $\mathcal{S}^{1}$ with $[0,+\infty]$ via a bijective map and thus the question of denseness in $\mathbb{R}_{> 0}$ can be translated into that in $\mathcal{S}^{1}$. Therefore, direction sets are indeed generalizations of ratio sets. Leonetti and Sanna \cite[Theorem 1.2]{BAMS} proved a necessary and sufficient criterion that determines whether a set $X \subseteq \mathcal{S}^{k - 1}$ can be realized as the set of accumulation points of $\mathcal{D}^{\underline{k}}(A)$ for some $A \subseteq \mathbb{N}$. Moreover, they proved a sufficient condition (cf. \cite[Theorem 1.5]{BAMS}) that asserts whether $\mathcal{D}^{k}(A)$ is dense in $\mathcal{S}^{k - 1}$. 

%

In this article, we further generalize the notion of direction sets and introduce generalized $k$-direction sets as follows.

\begin{defn}\label{gen-defn}
Let $k \geq 2$ be an integer and let $U_{1},\ldots,U_{k}$ be non-empty subsets of $\mathbb{N}$. We define the $k$-generalized direction set for the $k$-tuple $(U_{1},\ldots,U_{k})$ to be $\mathcal{D}^{k}(U_{1},\ldots,U_{k}) := \{\rho(u_{1},\ldots,u_{k}) : u_{j} \in U_{j} \mbox{ for } j = 1,\ldots,k\}$. Also, we define the distinct $k$-generalized direction set to be $\mathcal{D}^{\underline{k}}(U_{1},\ldots,U_{k}) := \{\rho(u_{1},\ldots,u_{k}) : u_{j} \in U_{j} \mbox{ for } j = 1,\ldots,k \mbox{ and } u_{i} \neq u_{j} \mbox{ for all } i \neq j\}$.
\end{defn}
Our first theorem is an analogue of Theorem 1.2 of \cite{BAMS} for distinct $k$-generalized direction sets. For any set $X \subseteq \mathcal{S}^{k - 1}$, we denote by $X^{\prime}$ the set of accumulation points of $X$. Also, we denote by $S_{k}$ the symmetric group on $k$ elements $\{1,\ldots,k \}$. For a permutation $\pi \in S_k$, we define $\pi(x_1,\dots,x_k):=(x_{\pi(1)},\dots,x_{\pi(k)})$ for all $\underline{x}=(x_1,\dots,x_k)$ in $\mathcal{S}^{k-1}.$ Also, for any subset $I$ of $\{1,\dots,k\}$, we define $\rho_I(\underline{x}):=\rho(\underline{y})$ where $\underline{y}=(y_1,\dots,y_k)$ is defined as $y_i:=x_i$ if $i\in I$ and the other coordinates as $0$. We say that $I$ {\it meets} $\underline{x}$ if $x_i\neq0$ for some $i \in I$. We state our first theorem as follows.
\begin{theorem}\label{THE-BIG-MAIN-TH}
Let $k \geq 2$ be an integer.  For subsets $U_{1},\ldots,U_{k}$ of $\mathbb{N}$, let $X=\mathcal{D}^{\underline{k}}(U_1,\dots,U_k)^{\prime}$. Then, we have:
\begin{enumerate}
    \item[(i)] $X$ is a closed subset of $\mathcal{S}^{k - 1}$.
    \item[(ii)] If $U_{i_1}=\dots=U_{i_m}$ for some $\{i_1,\dots,i_m\}\subseteq\{1,\dots,k\}$, then  for $\pi\in S_k $ with $\pi(j)=j$ for all $j\notin \{i_1,\dots,i_m\}$, we have $\pi(\underline{x})\in X$ for every $\underline{x}\in X$.
    \item[(iii)] If $|U_{i}| \geq k$ for each $i \in \{1,\ldots,k\}$, then for every $I\subseteq\{1,\dots,k\}$ that meets $\underline{x}$, we have $\rho_I(\underline{x})\in X$.
\end{enumerate}
\end{theorem}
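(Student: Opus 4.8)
The plan is to treat the three parts separately, since they are of increasing difficulty. For part (i), I would recall that the set of accumulation points of any subset of a metric space is automatically closed; since $\mathcal{S}^{k-1}$ is a metric space (a subspace of $\mathbb{R}^k$), $X = \mathcal{D}^{\underline{k}}(U_1,\dots,U_k)^{\prime}$ is closed without any further argument. It is worth noting as well that $X \subseteq \mathcal{S}^{k-1}$ because $\mathcal{S}^{k-1}$ is itself closed in $[0,1]^k$, so all accumulation points lie in it.

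For part (ii), suppose $U_{i_1} = \cdots = U_{i_m}$ and fix a permutation $\pi \in S_k$ fixing every index outside $\{i_1,\dots,i_m\}$. Given $\underline{x} \in X$, choose a sequence of distinct points $\rho(u_1^{(n)},\dots,u_k^{(n)}) \in \mathcal{D}^{\underline{k}}(U_1,\dots,U_k)$ converging to $\underline{x}$, where $u_j^{(n)} \in U_j$ and the coordinates within each tuple are pairwise distinct. Because the sets $U_{i_1},\dots,U_{i_m}$ all coincide, permuting the coordinates at positions $i_1,\dots,i_m$ by $\pi$ produces tuples $(u_{\pi(1)}^{(n)},\dots,u_{\pi(k)}^{(n)})$ that still lie in $U_1 \times \cdots \times U_k$ and still have pairwise distinct coordinates; hence $\rho(u_{\pi(1)}^{(n)},\dots,u_{\pi(k)}^{(n)}) = \pi\big(\rho(u_1^{(n)},\dots,u_k^{(n)})\big) \in \mathcal{D}^{\underline{k}}(U_1,\dots,U_k)$, using that $\rho$ commutes with coordinate permutations and that $\|\cdot\|$ is permutation-invariant. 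Since $\pi$ acts as a homeomorphism (indeed an isometry) on $\mathcal{S}^{k-1}$, these images converge to $\pi(\underline{x})$; a minor point to check is that one can extract a subsequence along which the permuted points are still pairwise distinct as a sequence, which holds because $\pi$ is injective. Therefore $\pi(\underline{x}) \in X$.

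Part (iii) is the substantive one. Fix $\underline{x} \in X$ and $I \subseteq \{1,\dots,k\}$ meeting $\underline{x}$; I want to show $\rho_I(\underline{x}) \in X$. Pick a sequence $\rho(\underline{u}^{(n)}) \to \underline{x}$ with $\underline{u}^{(n)} \in U_1 \times \cdots \times U_k$ having distinct coordinates. The idea is to ``kill'' the coordinates outside $I$: for each $i \notin I$ we want to replace $u_i^{(n)}$ by a much larger element of $U_i$, so that after renormalizing, those coordinates tend to $0$ while the ratios among the coordinates in $I$ are essentially preserved. Concretely, along a subsequence we may assume $\|\underline{u}^{(n)}\| \to \infty$ or control its growth; then for $i \notin I$ choose $v_i^{(n)} \in U_i$ with $v_i^{(n)} \to \infty$ and $v_i^{(n)} / \big(\sum_{j \in I} u_j^{(n)2}\big)^{1/2} \to \infty$, which is possible precisely because $|U_i| \geq k$ forces $U_i$ to be infinite — here is where the hypothesis is used. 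One must also arrange the new tuple to have pairwise distinct coordinates, which is where a bit of care with the $|U_i| \geq k$ bound and a diagonal/greedy choice enters: pick the $v_i^{(n)}$ one index at a time, each time avoiding the finitely many values already used, and note $I$ meeting $\underline{x}$ guarantees $\sum_{j\in I} u_j^{(n)2} > 0$ for large $n$ so the normalization is legitimate. The main obstacle I anticipate is making the replacement simultaneously achieve (a) distinctness of all $k$ coordinates, (b) the outside-$I$ coordinates vanishing after normalization, and (c) convergence of $\rho$ of the new tuple to exactly $\rho_I(\underline{x})$ rather than some nearby point; this requires choosing the $v_i^{(n)}$ to grow fast enough relative to $\|\underline{u}^{(n)}\|$ but then checking that the in-$I$ block of the normalized vector converges to $\rho_I(\underline{x})$, which follows since $\rho(u_{i_1},\dots,u_{i_\ell}, v\text{'s}) \to \rho_I(\underline{x})$ as the $v$'s dominate — a direct computation with the Euclidean norm. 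Finally, passing to a subsequence ensures the resulting points of $\mathcal{D}^{\underline{k}}(U_1,\dots,U_k)$ are genuinely distinct, so $\rho_I(\underline{x})$ is an accumulation point.
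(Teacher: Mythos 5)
Parts (i) and (ii) of your argument are correct and essentially the same as the paper's: (i) is the standard fact that the derived set of a subset of a metric space is closed, and (ii) follows by permuting the coordinates of the approximating tuples, which preserves membership in $U_1\times\cdots\times U_k$ and pairwise distinctness because the permuted positions carry equal sets.

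Part (iii), however, has the replacement going in the wrong direction. You propose to replace each coordinate $u_i^{(n)}$ with $i\notin I$ by an element $v_i^{(n)}\in U_i$ chosen so large that $v_i^{(n)}/\bigl(\sum_{j\in I}(u_j^{(n)})^2\bigr)^{1/2}\to\infty$. But then the coordinates outside $I$ \emph{dominate} the Euclidean norm of the new tuple, so after normalization it is the coordinates in $I$ that tend to $0$: the limit of $\rho$ of the new tuples would be supported on the complement of $I$, not equal to $\rho_I(\underline{x})$. To kill the coordinates outside $I$ you must make them \emph{negligible} relative to the $I$-block, not larger. Moreover, your justification that ``$|U_i|\ge k$ forces $U_i$ to be infinite'' is false: $U_i$ may have exactly $k$ elements, so no sequence tending to infinity need exist in $U_i$, and the hypothesis cannot be used that way. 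The correct argument (the paper's) freezes the outside-$I$ coordinates at \emph{fixed} values. Since $\underline{x}$ is an accumulation point, one may take infinitely many distinct directions $\rho(\underline{a}^{(n)})\to\underline{x}$, which forces $\|\underline{a}^{(n)}\|\to\infty$ along a subsequence (integer tuples of bounded norm are finite in number); as $I$ meets $\underline{x}$, some coordinate $a_{i_0}^{(n)}$ with $i_0\in I$ tends to infinity. For each $j\notin I$ choose pairwise distinct constants $c_j\in U_j$ avoiding the at most $k-1$ values needed to keep all coordinates of the new tuple distinct for large $n$ --- this is exactly where $|U_j|\ge k$ is used --- and set $d_i^{(n)}:=a_i^{(n)}$ for $i\in I$ and $d_j^{(n)}:=c_j$ for $j\notin I$. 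The bounded constants are then swamped by the growing $I$-block, so $\rho(\underline{d}^{(n)})\to\rho_I(\underline{x})$, and these points are eventually distinct from $\rho_I(\underline{x})$, which gives $\rho_I(\underline{x})\in X$.
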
 

We recall that for a non-empty set $A \subseteq \mathbb{N}$, the natural density of $A$ is defined as $d(A) := \displaystyle\lim_{X \to \infty}\frac{\#\{n \in A : n \leq X\}}{X}$, provided the limit exists. The next theorem provides a sufficient condition for $\mathcal{D}^{k}(U_1,\dots, U_k)$ to be dense in $\mathcal{S}^{k - 1}$.

\begin{theorem}\label{TH-1}
Let $k \geq 2$ be an integer and let $U_1,\dots, U_k\subseteq \mathbb{N}$ be such that $d(U_i)$ exists and equals $\delta_i>0$ $ \mbox{ for all } i=1,\dots,k$. Assume that $\displaystyle\bigcap_{i=1}^{k}U_i$ is an infinite set. Then 
$\mathcal{D}^{k}(U_1,\dots, U_k)$ is dense in $\mathcal{S}^{k-1}$.
\end{theorem}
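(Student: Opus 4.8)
The plan is to reduce everything to one scale-invariant fact about sets of positive density — that such a set has no ``long'' \emph{multiplicative} gaps — and then, given $\underline{x}\in\mathcal{S}^{k-1}$ and $\varepsilon>0$, to construct an explicit tuple $\underline{u}=(u_{1},\dots,u_{k})$ with $u_{j}\in U_{j}$ for which $\rho(\underline{u})$ is $\varepsilon$-close to $\underline{x}$. Since $\rho$ is continuous on $\mathbb{R}_{\geq0}^{k}\setminus\{\underline{0}\}$ and satisfies $\rho(t\,\underline{v})=\rho(\underline{v})$ for every $t>0$, only the ``shape'' of $\underline{u}$ will matter: $\underline{u}$ will be chosen so that, after dividing by a suitable large scale $L$, it is coordinatewise close to $\underline{x}$, with the coordinates $x_{j}=0$ treated separately.

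First I would establish the following gap estimate: if $U\subseteq\mathbb{N}$ has $d(U)=\delta>0$, then for every $\eta>0$ there is $Y_{0}=Y_{0}(U,\eta)$ such that $U\cap[y,(1+\eta)y]\neq\emptyset$ for all real $y\geq Y_{0}$. The proof is by contradiction: if $U\cap[y_{n},(1+\eta)y_{n}]=\emptyset$ along a sequence $y_{n}\to\infty$, then $\#\{m\in U:m\leq(1+\eta)y_{n}\}=\#\{m\in U:m\leq y_{n}\}$, and dividing both sides by $(1+\eta)y_{n}$ and letting $n\to\infty$ forces $\delta=\delta/(1+\eta)$, which is absurd since $\delta>0$. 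I would also record that each $U_{i}$ is infinite — being of positive density, and in any case since it contains the infinite set $\bigcap_{i=1}^{k}U_{i}$ — so that $m_{i}:=\min U_{i}$ is well defined and each $U_{i}$ contains arbitrarily large integers.

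Now fix $\underline{x}=(x_{1},\dots,x_{k})\in\mathcal{S}^{k-1}$ and $\varepsilon>0$; as $\|\underline{x}\|=1$ we have $\underline{x}\neq\underline{0}$, so $P:=\{j:x_{j}>0\}$ is nonempty. Choose a small parameter $\eta>0$ (to be fixed at the end), and then a real scale $L$ large enough that $Lx_{j}\geq Y_{0}(U_{j},\eta)$ for every $j\in P$. For $j\in P$ pick $u_{j}\in U_{j}\cap[Lx_{j},(1+\eta)Lx_{j}]$ using the gap estimate, and for $j\notin P$ set $u_{j}:=m_{j}$. With $\underline{u}:=(u_{1},\dots,u_{k})$, the coordinate $u_{j}/L$ lies in $[x_{j},(1+\eta)x_{j}]$ for $j\in P$, while $u_{j}/L=m_{j}/L\to0$ for $j\notin P$ as $L\to\infty$. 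Hence $L^{-1}\underline{u}\to\underline{x}$ coordinatewise — and so in norm — as $\eta\to0$ and $L\to\infty$; in particular $L^{-1}\underline{u}\neq\underline{0}$ once $\eta$ is small and $L$ large, so $\rho(\underline{u})=\rho(L^{-1}\underline{u})$, and continuity of $\rho$ at $\underline{x}$ gives $\rho(\underline{u})\to\rho(\underline{x})=\underline{x}$. Choosing first $\eta$ small and then $L$ large therefore yields $\rho(\underline{u})\in\mathcal{D}^{k}(U_{1},\dots,U_{k})$ with $\|\rho(\underline{u})-\underline{x}\|<\varepsilon$, which is the asserted density.

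The only real content is the gap estimate together with the observation that \emph{positive density} is exactly the scale-invariant input that survives normalisation by $\|\cdot\|$: additively a positive-density set can have arbitrarily long gaps, but multiplicatively it cannot, and multiplicative control is precisely what is needed because $\rho$ forgets scale. Everything after that is a routine continuity argument — uniform continuity of $\rho$ on the compact set $\{\underline{v}\in[0,2]^{k}:\|\underline{v}\|\geq 1/2\}$ makes the two-parameter passage to the limit in $\eta$ and $L$ elementary. The one point requiring a little care is the bookkeeping when some $x_{j}=0$: one must check that the fixed contributions $m_{j}$ become negligible after normalisation, which holds since $\|\underline{u}\|\geq\max_{j\in P}u_{j}\geq L\max_{j\in P}x_{j}\to\infty$ while the $m_{j}$ remain bounded.
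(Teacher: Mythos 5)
Your proof is correct, and its core engine is the same as the paper's: positive natural density forces $U\cap[y,(1+\eta)y]\neq\emptyset$ for all large $y$ (the paper derives the equivalent statement from $\lim_{X\to\infty}U_i(a_iX)/U_i(b_iX)=a_i/b_i<1$), and one then normalises using the scale-invariance and continuity of $\rho$. The genuine difference is in how the normalising scale is chosen. The paper picks the scale to be a large element $u\in\bigcap_{i=1}^{k}U_i$, which is the only place the hypothesis that the intersection is infinite enters; you instead use an arbitrary large real $L$, which is all that the identity $\rho(t\underline{v})=\rho(\underline{v})$ requires. Your argument therefore shows that the intersection hypothesis is superfluous --- the conclusion already follows from $d(U_i)=\delta_i>0$ alone (e.g.\ it applies to $U_1$ the even and $U_2$ the odd numbers, where the intersection is empty). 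You also treat the boundary case $x_j=0$ explicitly via $u_j=\min U_j$ and the observation that $\|\underline{u}\|\geq L\max_{j\in P}x_j\to\infty$, a point the paper's proof passes over silently (there it is absorbed into allowing $a_i<0$). Both write-ups rely on the same final continuity step; yours packages the density input as a cleaner, reusable ``no multiplicative gaps'' lemma.
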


The next theorem extends Theorem 1.5 of \cite{BAMS}, which asserts that if for a set $A \subseteq \mathbb{N}$, there exists an increasing sequence $\{a_{n}\}_{n = 1}^{\infty} \subseteq A$ with $\displaystyle\lim_{n \to \infty} \frac{a_{n}}{a_{n + 1}} = 1$, then $\mathcal{D}^{k}(A)$ is dense in $\mathcal{S}^{k - 1}$. We generalize this for $\mathcal{D}^{k}(U_1,\dots, U_k)$ as follows.

\begin{theorem}\label{th-2}
Let $k \geq 2$ be an integer and let $U_1, U_2,\dots, U_k$ be non-empty subsets of $\mathbb{N}$. If there exist increasing sequences $u_i^{(n)}\subseteq U_i$ for all $i \in \{1,\ldots,k\}$ such that $\displaystyle\lim_{n \to \infty}\frac{u_{i}^{(n - 1)}}{u_i^{(n)}}=1$, then $\mathcal{D}^{k}(U_1,\dots, U_k)$ is dense in $\mathcal{S}^{k-1}$.
\end{theorem}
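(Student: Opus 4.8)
The plan is to show that $\mathcal{D}^k(U_1,\dots,U_k)$ comes arbitrarily close to any prescribed point $\underline{t}=(t_1,\dots,t_k)\in\mathcal{S}^{k-1}$. Fix such a $\underline{t}$ and a tolerance $\varepsilon>0$. Since $\rho$ is continuous on $\mathbb{R}_{\geq 0}^k\setminus\{\underline{0}\}$, it suffices to produce, for every $\eta>0$, a choice of $u_j\in U_j$ with $(u_1,\dots,u_k)$ within relative error $\eta$ of a fixed positive scalar multiple of $\underline{t}$; continuity of $\rho$ then converts this into the required $\varepsilon$-closeness after projecting to $\mathcal{S}^{k-1}$. (A small technical point: some coordinates $t_i$ may be zero, so I will first handle the case where all $t_i>0$ and then remark that the zero-coordinate case follows by a routine limiting argument, letting the corresponding $u_i$ be a fixed small element of $U_i$ while the other coordinates grow.)

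**The main construction** uses the hypothesis that each $U_i$ contains an increasing sequence $\{u_i^{(n)}\}$ with $u_i^{(n-1)}/u_i^{(n)}\to 1$. The key consequence I would extract is that for each $i$, the set $\{u_i^{(n)}\}_{n\geq 1}$ is "multiplicatively dense at infinity" in the sense that for any $\lambda>1$ and any $N$, there is an element of the sequence in the interval $[M,\lambda M]$ for some $M\geq N$; more precisely, the ratios of consecutive terms tending to $1$ means that for large $n$ the points $u_i^{(n)}$ form an arbitrarily fine multiplicative net on $[N,\infty)$. Now given the target direction with all $t_i>0$, pick a large real parameter $T$ and aim for $u_i\approx t_i T$. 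For each $i$ separately, choose $n_i$ so that $u_i^{(n_i)}$ is the largest term of the $i$-th sequence not exceeding $t_i T$ (this exists once $T$ is large, since the sequences are increasing and unbounded); then by the consecutive-ratio hypothesis, $u_i^{(n_i)}/(t_i T)\to 1$ as $T\to\infty$. Setting $u_i:=u_i^{(n_i)}\in U_i$, the vector $(u_1,\dots,u_k)$ satisfies $u_i/(t_i T)\to 1$ uniformly in $i$ as $T\to\infty$, hence $\rho(u_1,\dots,u_k)=\rho\bigl(u_1/(t_1T),\dots,u_k/(t_kT)\bigr)\cdot$(componentwise)$\to\rho(\underline{1})$... more carefully, $\rho(u_1,\dots,u_k)\to\rho(t_1,\dots,t_k)=\underline{t}$ since each coordinate ratio $u_i/(t_iT)\to 1$ forces $(u_1,\dots,u_k)/(t_1T,\dots,t_kT)\to(1,\dots,1)$, and $\rho$ is homogeneous of degree $0$ and continuous. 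Choosing $T$ large enough makes $\|\rho(u_1,\dots,u_k)-\underline{t}\|<\varepsilon$.

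**The step I expect to be the main obstacle** is making the "largest term not exceeding $t_iT$ has ratio tending to $1$" claim fully rigorous and uniform across the $k$ coordinates simultaneously. The subtlety is that each sequence $\{u_i^{(n)}\}$ has its own rate at which consecutive ratios approach $1$, and $T$ must be chosen large enough to control all $k$ of them at once; this is handled by taking, for a given $\eta>0$, a threshold $N_i$ beyond which $u_i^{(n-1)}/u_i^{(n)}>1-\eta$ for all $n>N_i$, then requiring $t_iT > u_i^{(N_i)}$ for every $i$, i.e. $T>\max_i u_i^{(N_i)}/t_i$. For such $T$ the chosen index $n_i$ exceeds $N_i$, so $t_iT\geq u_i^{(n_i)}\geq u_i^{(n_i-1)}>(1-\eta)u_i^{(n_i)}$, giving $1\geq u_i^{(n_i)}/(t_iT)>1-\eta$ for all $i$ simultaneously. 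Letting $\eta\to 0$ finishes the argument. I would also remark that this theorem genuinely generalizes Theorem 1.5 of \cite{BAMS}: taking $U_1=\dots=U_k=A$ and $u_i^{(n)}=a_n$ recovers that statement, and Theorem~\ref{th-2} is strictly more flexible since the sequences witnessing the hypothesis may be completely different in each coordinate.
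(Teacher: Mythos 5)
Your proposal is correct and follows essentially the same route as the paper: both arguments scale the target direction by a large parameter, select in each coordinate the sequence term adjacent to $t_iT$ (the paper takes the smallest term exceeding $mx_i$, you take the largest not exceeding $t_iT$), and use the consecutive-ratio hypothesis plus continuity and homogeneity of $\rho$ to conclude. Only a cosmetic remark: in your final inequality chain the relevant neighbour is $u_i^{(n_i+1)}>t_iT$, giving $u_i^{(n_i)}/(t_iT)>u_i^{(n_i)}/u_i^{(n_i+1)}>1-\eta$, rather than the $u_i^{(n_i-1)}$ you wrote; and your explicit treatment of the uniformity in $i$ and of the zero-coordinate case is actually more careful than the paper's, which silently restricts to $x_i>0$.
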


\begin{rmk}
For an integer $k \geq 2$ and for each $i \in \{1,\ldots,k\}$, let $a_{i}$ and $m_{i}$ be integers with $\gcd(a_{i},m_{i}) = 1$. Let $\mathbb{P}_{m_{i}} := \{p \in \mathbb{P} : p \equiv a_{i} \pmod{m_{i}}\}$. For $U_{i} = \mathbb{P}_{m_{i}}$, using Dirichlet's theorem for primes in arithmetic progressions, we see that the hypotheses of Theorem \ref{th-2} are satisfied. Therefore, $\mathcal{D}^{k}(\mathbb{P}_{m_{1}},\ldots,\mathbb{P}_{m_{k}})$ is dense in $\mathcal{S}^{k - 1}$.
\end{rmk}

\begin{theorem}\label{TH--3}
Let $k \geq 2$ be an integer and for each $i \in \{1,\ldots,k\}$, let $f_{i}(X_1,\dots, X_m)\in \mathbb{Z}[X_1,\dots, X_m]$ be polynomials of total degree $d_{i}$ such that the sum of the coefficients of degree $d_{i}$ terms is positive. Let $U_{i}:=\{f_{i}(n_1,\dots,n_m)|(n_1,\dots,n_m)\in \mathbb{N}^m\}\cap \mathbb{N}.$ Then $\mathcal{D}^k(U_{1},\ldots,U_{k})$ is dense in $\mathcal{S}^{k-1}.$
\end{theorem}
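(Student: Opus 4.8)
The plan is to deduce this directly from Theorem~\ref{th-2} by restricting each $f_{i}$ to the diagonal $X_{1}=\cdots=X_{m}$. Write $f_{i}=\sum_{\alpha}a_{i,\alpha}X^{\alpha}$, where $\alpha$ runs over multi-indices in $\mathbb{N}^{m}$ with $|\alpha|\le d_{i}$, and set $c_{i}:=\sum_{|\alpha|=d_{i}}a_{i,\alpha}$, which is positive by hypothesis. Consider the one-variable polynomial $g_{i}(t):=f_{i}(t,\ldots,t)\in\mathbb{Z}[t]$. Grouping the terms of $f_{i}$ by total degree shows that the coefficient of $t^{d_{i}}$ in $g_{i}$ equals exactly $c_{i}$, so $g_{i}$ has degree precisely $d_{i}$ and positive leading coefficient. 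This is the one point where positivity of the sum of top-degree coefficients (rather than mere nonvanishing) is used: it guarantees that the diagonal restriction does not collapse the degree or flip the sign of the leading term.

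First I would dispose of the degenerate case. If $d_{i}=0$ for some $i$, then $f_{i}$ is a positive integer constant and $U_{i}$ is a singleton, so $\mathcal{D}^{k}(U_{1},\ldots,U_{k})$ is finite and the statement is vacuous; hence we may assume $d_{i}\ge 1$ for all $i$. Then each $g_{i}$ is nonconstant with positive leading coefficient, so there is an integer $t_{0}\ge 1$, which may be taken the same for all $i$ (take the maximum of the finitely many thresholds), such that $g_{i}$ is strictly increasing on $[t_{0},\infty)$ and $g_{i}(t)\ge 1$ there. Define $u_{i}^{(n)}:=g_{i}(t_{0}+n)=f_{i}(t_{0}+n,\ldots,t_{0}+n)$ for $n\ge 1$. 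Each $u_{i}^{(n)}$ is obtained by evaluating $f_{i}$ at the point $(t_{0}+n,\ldots,t_{0}+n)\in\mathbb{N}^{m}$ and is a positive integer, so $u_{i}^{(n)}\in U_{i}$; moreover $(u_{i}^{(n)})_{n\ge 1}$ is strictly increasing because $g_{i}$ is increasing on $[t_{0},\infty)$.

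Finally I would verify the ratio hypothesis of Theorem~\ref{th-2}. For each fixed $i$, both $u_{i}^{(n-1)}=g_{i}(t_{0}+n-1)$ and $u_{i}^{(n)}=g_{i}(t_{0}+n)$ are, as functions of $n$, polynomials of degree $d_{i}$ with the same leading coefficient $c_{i}$, and therefore $\lim_{n\to\infty}u_{i}^{(n-1)}/u_{i}^{(n)}=1$. All hypotheses of Theorem~\ref{th-2} are met, and we conclude that $\mathcal{D}^{k}(U_{1},\ldots,U_{k})$ is dense in $\mathcal{S}^{k-1}$, as claimed.

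I do not expect a genuine obstacle here: the argument is a short reduction to Theorem~\ref{th-2}. The only steps requiring attention are checking that the diagonal substitution preserves the degree and leading coefficient (where the positivity assumption is essential) and the bookkeeping needed to handle the degenerate constant case $d_{i}=0$.
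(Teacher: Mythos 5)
Your proof is correct and follows essentially the same route as the paper's: restrict each $f_{i}$ to the diagonal to obtain a one-variable polynomial whose leading coefficient is the (positive) sum of the top-degree coefficients, note that its values give an eventually increasing sequence in $U_{i}$ with consecutive ratios tending to $1$, and invoke Theorem~\ref{th-2}. The only quibble is your aside on $d_{i}=0$: there $\mathcal{D}^{k}(U_{1},\ldots,U_{k})$ need not be finite (the other $U_{j}$ may be infinite) and the conclusion genuinely fails rather than being vacuous, so one must read the theorem as assuming $d_{i}\ge 1$ --- an assumption the paper's own proof also makes implicitly --- but this does not affect your main argument.
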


In \cite{Toth-Salat}, it is proven that there is a $3$-partition of $\mathbb{N} = A \cup B \cup C$, such that none of $R(A), R(B)$ and $R(C)$ is dense in $\mathbb{R}_{> 0}$. That is, none of $\mathcal{D}^{2}(A), \mathcal{D}^{2}(B)$ and $\mathcal{D}^{2}(C)$ is dense in $\mathcal{S}^{1}$. In \cite{BAMS}, Leonetti and Sanna asked for a possible generalization of this result for $k \geq 3$ \cite[Question 1.9]{BAMS}. We give a partial answer to their question in the next theorem.

\begin{theorem}\label{partition}
Let $k \geq 3$ be an integer. Then there exists a $3$-partition $\mathbb{N} = A \cup B \cup C$ of $\mathbb{N}$ such that none of $\mathcal{D}^{k}(A), \mathcal{D}^{k}(B)$ or $\mathcal{D}^{k}(C)$ is dense in $\mathcal{S}^{k - 1}$.
\end{theorem}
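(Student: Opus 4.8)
The plan is to recycle the classical $3$-partition $\mathbb{N}=A\cup B\cup C$ of Salát and Tóth (from \cite{Toth-Salat}) and show that it already works for every $k\geq 3$. Recall that in that construction one builds three sets whose elements, arranged in increasing order, lie in ``blocks'' of rapidly growing scale: there is an increasing sequence $1=t_0<t_1<t_2<\cdots$ of thresholds (one may take $t_{j}$ growing at least geometrically, e.g.\ $t_{j+1}\geq t_j^2$) so that on the interval $[t_j,t_{j+1})$ all of $\mathbb{N}$ is assigned to a single one of $A,B,C$, cycling through the three sets as $j$ increases. The key quantitative feature is that for any two elements $x<y$ of the same set, either $x,y$ lie in the same block (so $y/x<t_{j+1}/t_j$ is unbounded, which is harmless for the two-variable argument but needs care for us — see below) or they lie in different blocks of that set, in which case $y/x\geq t_{j+3}/t_{j+1}$, which is very large. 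This is exactly what forces $R(A)$ to avoid a neighbourhood of $1$ (and similarly for $B,C$), hence none is dense in $\mathbb{R}_{>0}$.

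First I would fix notation and reproduce the construction, being slightly more careful than in the $k=2$ case: I want the thresholds to grow fast enough that \emph{within} a single block the set is still ``sparse at the top'', or alternatively I replace the construction by one in which each set $S\in\{A,B,C\}$, listed in increasing order as $s_1<s_2<\cdots$, satisfies $s_{n+1}/s_n\to\infty$. (One way: make block $j$ of the cycle contain only the single integer $t_j$, discarding the rest — but then the three sets do not partition $\mathbb{N}$; instead I will distribute the ``filler'' integers so that each set is a union of intervals but the intervals are separated multiplicatively by an amount tending to infinity, which is what \cite{Toth-Salat} already arranges.) The upshot I need is: there is a constant $c>1$ such that for each $S\in\{A,B,C\}$ and any $a,b\in S$ with $a\neq b$, the ratio $\max(a,b)/\min(a,b)\notin(1/c,c)\setminus\{1\}$ — more precisely, consecutive elements of $S$ within a block are bounded, but I will instead use the cleaner statement that $S$ omits an entire multiplicative interval infinitely often, which localizes the obstruction.

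The core step is to transfer non-denseness of $R(S)$ in $\mathbb{R}_{>0}$ to non-denseness of $\mathcal{D}^{k}(S)$ in $\mathcal{S}^{k-1}$ for $k\geq 3$. Here is the mechanism: suppose toward a contradiction that $\mathcal{D}^{k}(S)$ is dense in $\mathcal{S}^{k-1}$. Consider the point $\underline{x}^{*}=\rho(1,1,0,\dots,0)=(\tfrac{1}{\sqrt2},\tfrac{1}{\sqrt2},0,\dots,0)\in\mathcal{S}^{k-1}$ (using $k\geq 2$; the hypothesis $k\geq 3$ enters because we have spare coordinates to send to $0$, which is where the classical obstruction lives). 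Denseness would give sequences $s_1^{(n)},\dots,s_k^{(n)}\in S$ with $\rho(s_1^{(n)},\dots,s_k^{(n)})\to\underline{x}^{*}$; comparing the first two coordinates forces $s_1^{(n)}/s_2^{(n)}\to 1$, i.e.\ $R(S)$ accumulates at $1$. But the construction guarantees $R(S)$ stays bounded away from $1$ (on the $\neq 1$ side) — here I must handle the equal-elements case $s_1^{(n)}=s_2^{(n)}$, which is legitimate for $\mathcal{D}^{k}$ (not $\mathcal{D}^{\underline k}$): if $s_1^{(n)}=s_2^{(n)}$ infinitely often, then comparing, say, the first and third coordinates of $\rho(s_1^{(n)},\dots,s_k^{(n)})$ against those of a target point like $\rho(2,2,1,0,\dots,0)$ instead of $\underline{x}^{*}$, and using that $S$ cannot realize the ratios $2$ and $1$ simultaneously from a bounded block while also being forced to pick a third element commensurable with the first two. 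Concretely: choose the target direction so that two of its nonzero coordinates are in ratio exactly $2$; approximating it forces two elements of $S$ in ratio near $2$ \emph{and} (from a third coordinate) near $1$, producing two elements of $S$ in ratio near $2$, contradicting that $R(S)$ omits a neighbourhood of $2$ infinitely often. I expect the main obstacle to be this bookkeeping: choosing \emph{one} target direction in $\mathcal{S}^{k-1}$ whose approximation simultaneously forces several ``bad'' ratios into $S$, so that no matter how the $k$ chosen elements coincide or spread across blocks, the ratio structure of $\{A,B,C\}$ from \cite{Toth-Salat} is violated. Once the single-dimensional obstruction is correctly localized to a fixed target point and the case analysis over coincidence patterns of the $k$-tuple is dispatched, the theorem follows by applying the argument to each of $A$, $B$, $C$ in turn.
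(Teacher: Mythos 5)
Your overall strategy --- reuse the three-block partition from \cite{Toth-Salat} and reduce non-denseness of $\mathcal{D}^{k}(S)$ to non-denseness of $R(S)$ --- is the right one, and it is essentially what the paper does. But the paper dispatches the reduction in one line by invoking Theorem 1.4 of \cite{BAMS} (denseness of $\mathcal{D}^{k}(A)$ implies denseness of $\mathcal{D}^{k-1}(A)$, hence inductively of $\mathcal{D}^{2}(A)=R(A)$ up to the identification of $\mathcal{S}^1$ with $[0,+\infty]$), whereas your hand-rolled version of this reduction has a genuine gap and also rests on a factual error about the construction.

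The factual error first: you assert that ``the construction guarantees $R(S)$ stays bounded away from $1$.'' It does not. Each of $A=\bigcup_k[5^k,2\cdot 5^k)\cap\mathbb{N}$, $B$, $C$ is a union of intervals of consecutive integers, so each contains pairs $n,n+1$ and $R(S)$ accumulates at $1$. What the construction actually gives (and what \cite[Proposition 3]{gems} records) is that $R(S)$ \emph{omits a multiplicative interval away from} $1$: for $A$, any same-block ratio lies in $(\tfrac12,2)$ and any cross-block ratio exceeds $\tfrac52$ (or is below $\tfrac25$), so $R(A)\cap(2,\tfrac52)=\emptyset$. Your proposed target points $\rho(1,1,0,\dots,0)$ and $\rho(2,2,1,0,\dots,0)$ are therefore aimed at the wrong obstruction, and the zero coordinates create exactly the coincidence/degeneration case analysis that you admit you cannot finish (``I expect the main obstacle to be this bookkeeping''). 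As written, the proof is incomplete.

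The repair is short and makes the whole case analysis evaporate: pick $t$ in the omitted interval, say $t=\tfrac94\in(2,\tfrac52)$, and use the target $\underline{x}^{*}=\rho(t,1,1,\dots,1)$, all of whose coordinates are strictly positive. If $\mathcal{D}^{k}(A)$ were dense, there would be $\underline{a}^{(n)}\in A^{k}$ with $\rho(\underline{a}^{(n)})\to\underline{x}^{*}$; since the second coordinate of $\underline{x}^{*}$ is nonzero, taking the quotient of the first two coordinates of $\rho(\underline{a}^{(n)})$ gives $a_1^{(n)}/a_2^{(n)}\to t$, so $a_1^{(n)}/a_2^{(n)}\in(2,\tfrac52)$ for large $n$, contradicting $R(A)\cap(2,\tfrac52)=\emptyset$. (Repeated entries among the $a_i^{(n)}$ are harmless here, since $a_1^{(n)}/a_2^{(n)}\to t\neq 1$ forces $a_1^{(n)}\neq a_2^{(n)}$ eventually, and in any case the ratio is an element of $R(A)$ regardless.) The same argument with the omitted intervals for $B$ and $C$ finishes the proof. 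Alternatively, simply cite \cite[Theorem 1.4]{BAMS} as the paper does; your approach, once fixed, amounts to an inline proof of the special case of that theorem that is needed.
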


\begin{rmk}
In view of Theorem \ref{partition}, it remains to be seen whether for a $2$-partition $\mathbb{N} = A \cup B$, either $\mathcal{D}^{k}(A)$ or $\mathcal{D}^{k}(B)$ is dense in $\mathcal{S}^{k - 1}$ or not. We note that Theorem \ref{th-2} cannot be directly applied to address this issue. This can be seen by considering $A = \displaystyle\bigcup_{k = 0}^{\infty} [3^{k},2\cdot 3^{k})\cap \mathbb{N}$ and $B = \displaystyle\bigcup_{k = 0}^{\infty} [2\cdot 3^{k}, 3^{k + 1})\cap \mathbb{N}$. For, if $\{a_{n}\}_{n = 1}^{\infty} \subseteq A$ is an infinite sequence, then there are infinitely many indices $i$ for which $a_{i} \in [3^{k},2\cdot 3^{k})$ and $a_{i + 1} \in [3^{\ell},2\cdot 3^{\ell})$ for $k < \ell$. Then it follows that $\frac{a_{i}}{a_{i + 1}} < \frac{2\cdot 3^{k}}{3^{\ell}} \leq \frac{2}{3}$. Therefore, the elements of the sequence $\{\frac{a_{n}}{a_{n + 1}}\}_{n = 1}^{\infty}$ cannot get arbitrarily close to $1$. Similar argument works for $B$ as well. Thus there exist a $2$-partition of $\mathbb{N}$, none of which contains a sequence with the ratio of consecutive terms converging to $1$.
\end{rmk}

\smallskip

One of the interesting questions in the literature of fractionally dense sets is to look for sets $A \subseteq \mathbb{N}$ such that the ratio set $R(A)$ is dense in $\mathbb{R}_{>0}$ but $A$ contains no $3$-term arithmetic progressions. One such set is $A = \{2^{m} : m \geq 2\} \cup \{3^{n} : n \geq 2\}$, which is known to be fractionally dense in $\mathbb{R}_{>0}$ but $A$ contains no $3$-term arithmetic progressions (cf. \cite[Proposition 6]{gems}). In view of this, we may ask the following question.

\begin{question}\label{quest-1}
For an integer $k \geq 2$, does there exist a set $A \subseteq \mathbb{N}$ such that $A$ contains no $3$-term arithmetic progressions and $\mathcal{D}^{k}(A)$ is dense in $\mathcal{S}^{k - 1}$?

\end{question}

We answer Question \ref{quest-1} assertively in the following theorem.

\begin{theorem}\label{prop-2}
There exists a set $A \subseteq \mathbb{N}$ such that $A$ contains no $3$-term arithmetic progressions and $\mathcal{D}^{k}(A)$ is dense in $\mathcal{S}^{k - 1}$.
\end{theorem}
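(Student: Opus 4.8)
The plan is to construct $A$ explicitly as a union of two sparse exponential sequences, mirroring the classical set $\{2^m\}\cup\{3^n\}$ that is fractionally dense yet free of $3$-term arithmetic progressions, and then to verify both properties for the higher-dimensional direction set. Concretely, I would take $A=\{2^m : m\geq 2\}\cup\{3^n : n\geq 2\}$ itself, or a mild variant thereof. The absence of $3$-term arithmetic progressions is exactly \cite[Proposition 6]{gems} (or can be reproved directly: three terms in arithmetic progression would force a relation among powers of $2$ and $3$ that is ruled out by unique factorization together with the rapid growth of the sequences), so the only real work is the denseness of $\mathcal{D}^{k}(A)$ in $\mathcal{S}^{k-1}$.

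For the denseness, the key observation is that Theorem \ref{th-2} applies to this $A$ in a generalized form: although $A$ itself does \emph{not} contain a single increasing sequence with ratio of consecutive terms tending to $1$ (the gaps between consecutive powers are multiplicative, not additive), we do not need that. The point is that to approximate an arbitrary target $\underline{x}=(x_1,\dots,x_k)\in\mathcal{S}^{k-1}$ by $\rho(a_1,\dots,a_k)$ with $a_i\in A$, it suffices to realize each ratio $a_i/a_1$ close to $x_i/x_1$ (for the coordinates where $x_1\neq 0$; handle zero coordinates by letting the corresponding $a_i$ be comparatively tiny). So the real statement needed is that the \emph{ratio set} $R(A)=\{a/b : a,b\in A\}$ is dense in $\mathbb{R}_{>0}$ — and more, that one can simultaneously approximate a finite tuple of positive reals by a tuple of ratios with a common denominator. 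I would first recall (or cite) that $R(A)$ is dense in $\mathbb{R}_{>0}$: writing $2^m/3^n=\exp(m\log 2-n\log 3)$ and using that $\log 2/\log 3$ is irrational, the values $m\log 2-n\log 3$ are dense in $\mathbb{R}$ by Weyl/Kronecker, hence $R(A)$ is dense in $\mathbb{R}_{>0}$.

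To upgrade this to simultaneous approximation with a common denominator, here is the clean route. Fix a target $\underline{x}\in\mathcal{S}^{k-1}$ and $\varepsilon>0$; without loss of generality $x_1>0$. For each $i$ with $x_i>0$, choose by density of $R(A)$ a representation $a_i/b_i$ with $a_i,b_i\in A$ and $a_i/b_i$ within $\varepsilon$ of $x_i/x_1$; for each $i$ with $x_i=0$ we will instead pick $a_i$ to be a huge power so that the ratio $a_i/(\text{common denominator})$ is negligibly small — wait, we need $a_i$ small, so pick the common denominator large and $a_i$ minimal. The subtlety is forcing a \emph{common} denominator $b$. One way: once all the $a_i/b_i$ are fixed, let $b$ be any element of $A$ that is an exact common multiple in the multiplicative sense is impossible since $A$ is not closed under multiplication; so instead proceed as in the proof of Theorem \ref{th-2} — use that the sequence of \emph{all} exponents is unbounded and that, after fixing approximations, one can rescale. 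I expect the cleanest argument is: apply Theorem \ref{th-2} not to $A$ directly but observe that $A$ contains, for every rational $p/q$ in lowest terms, elements arbitrarily well approximating any positive real as a ratio, and that given finitely many such ratios one can clear to a common denominator $b=2^{M}$ or $3^{N}$ with $M,N$ large, adjusting the numerators to powers of the same base; this works because $\{m\log 2 - n\log 3 : m,n\in\mathbb{N}\}$ is not just dense but a dense \emph{subgroup}-like set closed under the shifts we need. I would phrase this as: the map $(m,n)\mapsto m\log 2 - n\log 3$ has dense image, and for any finite set of targets $t_1,\dots,t_k\in\mathbb{R}$ and any $\delta>0$ there exist $M\in\mathbb{N}$ and $n_1,\dots,n_k\in\mathbb{N}$ with $|(M\log 2 - n_i\log 3) - t_i|<\delta$ for all $i$ simultaneously (choose $M$ first so that $M\log 2$ lands appropriately modulo $\log 3$, using equidistribution of $M\log 2/\log 3$; then choose each $n_i$ independently). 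Taking $b=3^{\max n_i + C}$ as the common denominator and $a_i=2^{M}\cdot 3^{\max n_i - n_i + C}$ — no: to stay inside $A$ the numerators must themselves be pure powers of $2$ or $3$, so instead set the common denominator to be a single power of $3$ and each numerator a single power of $2$, which is exactly the two-base structure; then $\rho(2^{M_1},\dots,2^{M_k})$ with a normalization factor — here one realizes all coordinates as $2^{M_i}$ and the direction is determined by the differences $M_i - M_j$, whose set is all of $\mathbb{Z}$, and then one inserts a factor of $3^{N}$ in select coordinates to break the ratios off the lattice $\{2^{j}\}$ and fill in the gaps densely.

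The main obstacle, and the part I would need to write carefully, is precisely this \emph{simultaneous} Diophantine approximation with the rigid constraint that every coordinate of the approximating point lies in the two-element-base set $A$: a single ratio is easy (Kronecker), but controlling $k$ ratios at once with entries drawn from $\{2^m\}\cup\{3^n\}$ requires choosing which coordinates are powers of $2$ and which are powers of $3$, and then invoking equidistribution of $(\{m\alpha\})_{m}$ for $\alpha=\log 2/\log 3$ to place the ``mixed'' ratios $2^m/3^n$ arbitrarily. I expect the argument splits into: (1) reduce denseness in $\mathcal{S}^{k-1}$ to simultaneously approximating $x_2/x_1,\dots,x_k/x_1$ (with the zero-coordinate cases handled by sending the relevant ratio to $0$, achievable since $\inf R(A)=0$); (2) for each such ratio independently find $2^{m_i}/3^{n_i}$ close to it; (3) clear denominators by multiplying the whole tuple by $3^{\max_i n_i}$, which keeps numerators of the form $2^{m_i}3^{\max_j n_j - n_i}$ — these are \emph{not} in $A$ in general, so in fact one must instead not clear denominators at all but rather observe that $\rho$ is scale-invariant only under a \emph{common} scalar, meaning the honest fix is to arrange $n_1=n_2=\dots=n_k=:n$ from the start and vary only $m_i$, approximating $x_i/x_1 \approx 2^{m_i-m_1}$, which only gives the countable dense-in-$\mathbb{R}_{>0}$-but-not-arbitrary set of powers of $2$ — insufficient. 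The genuine resolution, which I would adopt, is to take $A=\{2^m\}\cup\{3^n\}$ and, given a target, pick \emph{one} coordinate (say the first) to be a power of $3$, namely $3^{N}$ for large $N$, and all others to be powers of $2$, namely $2^{M_i}$; then $\rho$ of this tuple has $i$-th to $1$-st ratio equal to $2^{M_i}/3^{N}$, and as $N\to\infty$ along a suitable subsequence and $M_i\to\infty$ correspondingly, equidistribution of $\{N\log 3/\log 2\}$ lets us hit $\log(x_i/x_1)/\log 2 + $ anything, for all $i$ at once since the single free large parameter $N$ combined with the $k-1$ free parameters $M_i$ suffices (one equidistribution statement for the fractional part of $N\log 3/\log 2$, then round each $M_i$). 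This is the step I expect to consume most of the write-up, and it is where I would invoke Weyl's equidistribution theorem explicitly.
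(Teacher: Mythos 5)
There is a genuine gap, and it is fatal to the chosen set rather than to a single step: for $k\geq 3$ the set $A=\{2^{m}:m\geq 2\}\cup\{3^{n}:n\geq 2\}$ does \emph{not} have $\mathcal{D}^{k}(A)$ dense in $\mathcal{S}^{k-1}$. Any tuple $\underline{a}\in A^{k}$ has each coordinate a pure power of $2$ or a pure power of $3$, so by pigeonhole some two coordinates $a_{i},a_{j}$ are powers of the same base, forcing $a_{i}/a_{j}$ into the closed discrete set $\{2^{r}:r\in\mathbb{Z}\}\cup\{3^{s}:s\in\mathbb{Z}\}$. Passing to a subsequence along which the monochromatic pair $(i,j)$ is constant, any accumulation point $\underline{x}$ of $\mathcal{D}^{k}(A)$ with all coordinates positive must have some ratio $x_{i}/x_{j}$ in that discrete set; a target all of whose pairwise ratios avoid it (a generic choice) is therefore not approximable. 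This is exactly why fractional denseness (the $k=2$ statement from \cite[Proposition 6]{gems}) does not upgrade to $k\geq 3$. Your own final ``resolution'' (one coordinate $3^{N}$, the rest $2^{M_{i}}$) runs into the same wall: you need $\{\log_{2}t_{i}+N\log_{2}3\}$ to be simultaneously near $0$ for $k-1$ independent targets $t_{i}$, and the single free parameter $N$ can align only one of these fractional parts; ``rounding each $M_{i}$'' costs a multiplicative error as large as $\sqrt{2}$, which cannot be shrunk. You half-noticed this obstruction mid-argument but did not escape it.

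The paper sidesteps all of this by choosing a different $A$: the perfect powers $\{m^{r}: r\geq 3\}$, which contain no $3$-term arithmetic progression by the Darmon--Merel theorem on $x^{n}+y^{n}=2z^{n}$, and which (already for fixed $r$, e.g.\ the cubes) contain an increasing sequence with $m^{r}/(m+1)^{r}\to 1$, so that Theorem \ref{th-2} (equivalently \cite[Theorem 1.5]{BAMS}) gives denseness of $\mathcal{D}^{k}(A)$ immediately. If you want to salvage your write-up, replace your two-base set by the cubes (or $r$-th powers for any fixed $r\geq 3$) and cite Darmon--Merel in place of \cite[Proposition 6]{gems}; the entire equidistribution apparatus then becomes unnecessary.
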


\begin{rmk}
We shall see in the proof of Theorem \ref{prop-2} that we can obtain infinitely many sets $A \subseteq \mathbb{N}$ having no arithmetic progression of length $3$ such that $\mathcal{D}^{k}(A)$ is dense in $\mathcal{S}^{k - 1}$.
\end{rmk}
Next, we discuss the denseness of some particular type of sets whose properties have been recently considered in \cite{kfk}. For an arithmetic function $f : \mathbb{N} \to \mathbb{N}$ and a positive real number $X$, let $f_{X} := \#\{n \leq X : n = kf(k) \mbox{ for some } k \in \mathbb{N}\}$. Keeping this notation, we state the results of \cite{kfk} as follows.

\begin{theorem} \cite{kfk}\label{kfk}
(i) Let $\omega (n) = \displaystyle\sum_{\substack
{p \mid n \\ p \in \mathbb{P}}}1$ be the prime divisor function. Then $$\omega_{X} = \frac{X}{\log \log X} + o\left(\frac{X}{\log \log X}\right).$$

\smallskip

(ii) Let $\phi (n) = \#\{1 \leq k \leq n : \gcd (k,n) = 1\}$ be the Euler's totient function. Then $$\phi_{X} = cX^{\frac{1}{2}} + o(X^{\frac{1}{2}}),$$ where $c = \displaystyle\prod_{p}\left(1 + \frac{1}{p(p - 1 + \sqrt{p^{2} - p})}\right) \sim 1.365\ldots$.
\end{theorem}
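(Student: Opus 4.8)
We sketch proofs of both parts; throughout write $\log_2 X=\log\log X$ and $\log_3X=\log\log\log X$, and for an integer $m\ge 1$ set $\pi_m(Y):=\#\{n\le Y:\omega(n)=m\}$.

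\emph{Part (i).} If $n=k\,\omega(k)$ with $n\le X$, then $\omega(k)$ is a divisor of $n$ and $k\ge 2^{\omega(k)}$ forces $\omega(k)\le\log_2X$, so $\omega_X$ is the cardinality of the union $\bigcup_{m\le\log_2X}\{km:k\le X/m,\ \omega(k)=m\}$, whence
\[
\sum_{m\le\log_2X}\pi_m(X/m)-E(X)\ \le\ \omega_X\ \le\ \sum_{m\le\log_2X}\pi_m(X/m),
\]
where $E(X)$ counts the $n\le X$ admitting two representations $n=k_1m_1=k_2m_2$ with $\omega(k_i)=m_i$ and $m_1\ne m_2$. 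The first task is to evaluate the outer sum. Using the Sathe--Selberg expansion $\pi_m(Y)=\tfrac{Y}{\log Y}\tfrac{(\log_2Y)^{m-1}}{(m-1)!}\big(\lambda(\tfrac{m-1}{\log_2Y})+O_A(\tfrac{m}{(\log_2Y)^{2}})\big)$, uniform for $m\le A\log_2Y$, where $\lambda$ is continuous with $\lambda(1)=1$ (and noting that $\log(X/m)\sim\log X$, $\log_2(X/m)\sim\log_2X$ throughout our range, while $m>A\log_2X$ contributes negligibly), the summand is sharply concentrated near $m=\log_2X$, where $\lambda\to1$ and $1/m\sim1/\log_2X$; a short computation based on $\sum_{m\ge1}\tfrac{L^{m-1}}{m!}=\tfrac{e^{L}-1}{L}$ with $L=\log_2X$ then yields $\sum_{m\le\log_2X}\pi_m(X/m)\sim\tfrac{X}{\log X}\cdot\tfrac{\log X}{\log_2X}=\tfrac{X}{\log_2X}$.

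It remains to show $E(X)=o(X/\log_2X)$. Given $n=k_1m_1=k_2m_2$ with $m_1<m_2$, put $g=\gcd(m_1,m_2)$, $m_1=ga$, $m_2=gb$ with $\gcd(a,b)=1$; then $a\mid k_2$, say $k_2=ac$, so $k_1=bc$, and
\[
m_2-m_1=g(b-a)=\omega(ac)-\omega(bc)\le\omega(a)\le\log_2m_1\le\log_3X .
\]
Moreover $k_1$ is a multiple of $b=m_2/g$, so for fixed $(m_1,m_2)$ the number of admissible $n$ is at most $X/(b\,m_1)$. Let $W=\{m:|m-\log_2X|\le C\sqrt{\log_2X}\}$. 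If $m_1\notin W$ we bound the contribution by $\log_3X\cdot\sum_{m\notin W}\pi_m(X)\ll X\log_3X\cdot e^{-cC^{2}}$ via the Erd\H{o}s--Kac large-deviation estimate; if $m_1,m_2\in W$, then $g\le m_2-m_1\le\log_3X$ gives $b=m_2/g\gg\log_2X/\log_3X$, so the total over in-window pairs is $\ll|W|\cdot\log_3X\cdot\frac{X\log_3X}{(\log_2X)^{2}}\ll\frac{C\,X(\log_3X)^{2}}{(\log_2X)^{3/2}}$. Choosing $C=C(X)\to\infty$ slowly (e.g. $C=\log_3X$) makes both bounds $o(X/\log_2X)$, and hence $\omega_X=\tfrac{X}{\log_2X}+o\!\big(\tfrac{X}{\log_2X}\big)$.

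\emph{Part (ii).} The crucial point is that $k\mapsto k\phi(k)$ is injective. Indeed, if $m\phi(m)=n\phi(n)$ and $P$ is the largest prime dividing $mn$, then every prime $p\mid mn$ satisfies $0<p-1<P$, so $P\nmid\prod_{p\mid r}(p-1)$ and thus $v_P(r\phi(r))=2v_P(r)-1$ when $P\mid r$ and $v_P(r\phi(r))=0$ when $P\nmid r$; comparing parities of $v_P$ on both sides forces $P\mid m$, $P\mid n$ and $v_P(m)=v_P(n)$, and cancelling $P^{v_P(m)}$ and the resulting common factor $P-1$ reduces the identity to $m'\phi(m')=n'\phi(n')$ with $\Omega(m'n')<\Omega(mn)$, so $m=n$ by induction. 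Hence $\phi_X=\#\{k\ge1:k\phi(k)\le X\}$. Since $k\phi(k)=k^{2}\prod_{p\mid k}(1-1/p)$, the Dirichlet series $F(s)=\sum_{k\ge1}(k\phi(k))^{-s}$ has the Euler product $F(s)=\prod_p\big(1+(1-1/p)^{-s}(p^{2s}-1)^{-1}\big)$, and $F(s)=\zeta(2s)G(s)$ with $G(s)=\prod_p\big(1+((1-1/p)^{-s}-1)p^{-2s}\big)$ absolutely convergent and zero-free for $\Re s>0$. As $\zeta(2s)$ is holomorphic and non-vanishing on $\Re s\ge1/2$ apart from a simple pole at $s=1/2$ of residue $1/2$, the function $F$ has a simple pole at $s=1/2$ of residue $\tfrac12G(1/2)$, and a short manipulation using $\sqrt{p^{2}-p}=\sqrt p\,\sqrt{p-1}$ gives
\[
G(1/2)=\prod_p\Big(1+\frac{1-\sqrt{1-1/p}}{p\sqrt{1-1/p}}\Big)=\prod_p\Big(1+\frac{1}{p(p-1+\sqrt{p^{2}-p})}\Big)=c .
\]
Applying the Wiener--Ikehara theorem to $F(s)=\sum_{n\le X}b_n n^{-s}$ (with $b_n\in\{0,1\}$ by injectivity) yields $\phi_X=\sum_{n\le X}b_n\sim\frac{\tfrac12G(1/2)}{1/2}\,X^{1/2}=G(1/2)X^{1/2}=cX^{1/2}$, i.e. $\phi_X=cX^{1/2}+o(X^{1/2})$.

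\emph{The main obstacle} is the bound $E(X)=o(X/\log_2X)$ in Part (i): the naive inequality $E(X)\le\sum_{m_1<m_2}\pi_{m_1}(X/m_1)$ only gives $O(X\log_3X/\log_2X)$, which is not $o(X/\log_2X)$, so one must genuinely exploit the structural constraints $m_2-m_1\le\log_3X$ and $\big(m_2/\gcd(m_1,m_2)\big)\mid k_1$ together with the Erd\H{o}s--Kac concentration of $\omega$, and balancing the window parameter $C$ against these effects is the delicate point. By contrast Part (ii) is routine once injectivity is in hand, the remaining work being only the Euler-product evaluation of $G(1/2)$ and the verification of the hypotheses of the Tauberian theorem from the classical analytic properties of $\zeta$.
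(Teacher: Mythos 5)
First, a structural point: the paper does not prove Theorem \ref{kfk} at all --- it is quoted, with attribution, from Gabdullin and Ludelevich \cite{kfk} and used as a black box in the proof of Theorem \ref{kfk-TH}. So there is no internal proof to compare against; what follows is an assessment of your argument on its own terms. Part (ii) is correct: the induction on the largest prime factor of $mn$ does prove injectivity of $k\mapsto k\phi(k)$, the factorization $F(s)=\zeta(2s)\prod_p\bigl(1+((1-1/p)^{-s}-1)p^{-2s}\bigr)$ is right, the evaluation $G(1/2)=\prod_p\bigl(1+\tfrac{1}{p(p-1+\sqrt{p^2-p})}\bigr)$ checks out after rationalizing $1-\sqrt{1-1/p}$, and a Wiener--Ikehara-type theorem for a nonnegative series with a simple pole at $s=1/2$ of residue $\tfrac12G(1/2)$ gives $\phi_X\sim cX^{1/2}$.

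Part (i) has the right architecture (the main term is $\#\{k:k\,\omega(k)\le X\}=\sum_m\pi_m(X/m)$, and one must show collisions are negligible), but two intermediate claims are false as written. (a) From $k\ge 2^{\omega(k)}$ and $k\le X$ you may conclude only $\omega(k)\le\log X/\log 2$ (and $\omega(k)\ll\log X/\log\log X$ via the primorial), \emph{not} $\omega(k)\le\log\log X$. Restricting the union to $m\le\log_2X$ discards every $n=k\omega(k)$ with $\omega(k)>\log\log X$, which is roughly half the set: $\sum_{m\le L}L^{m-1}/m!\sim e^{L}/(2L)$ by the central limit theorem for the Poisson distribution, whereas your evaluation uses the full sum $(e^{L}-1)/L$. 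Your opening display and your asymptotic computation are therefore inconsistent; the union must run over all $m$, with the range $|m-\log_2X|>\epsilon\log_2X$ handled by the same large-deviation input you invoke later (Tur\'an's inequality alone is not enough here, since one needs an exceptional set of size $o(X/\log_2X)$, not $o(X)$). (b) The bound $\omega(a)\le\log_2 m_1$ is also false; trivially $\omega(a)\le\log a/\log 2\le\log m_1/\log 2$. Inside the window $W$ this is still $O(\log_3X)$, so your collision estimate survives, but outside $W$ the number of admissible $m_2$ per $m_1$ is only $O(\log_2X)$, which still suffices because you let $C\to\infty$. A smaller slip: the lower bound requires subtracting $\sum_n(r(n)-1)$ over $n$ with at least one representation, not merely the number of $n$ with two representations; since $r-1\le\binom{r}{2}$ and your estimate in fact counts pairs $(m_1,m_2)$, only the wording needs repair. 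With these corrections the sketch of part (i) is workable, but as it stands the very first display is not a valid decomposition of $\omega_X$.
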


Now, we state our result as follows.

\begin{theorem}\label{kfk-TH}
Let $A = \{n\omega (n) : n \in \mathbb{N}\}$ and $B = \{n\phi (n) : n \in \mathbb{N}\}$. Then for any integer $k \geq 2$, we have that both $\mathcal{D}^{k}(A)$ and $\mathcal{D}^{k}(B)$ are dense in $\mathcal{S}^{k - 1}$.
\end{theorem}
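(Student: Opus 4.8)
The plan is to deduce both assertions from Theorem \ref{th-2}, since the natural-density criterion of Theorem \ref{TH-1} is unavailable here: by Theorem \ref{kfk} one has $\#\{a \in A : a \le X\} = X/\log\log X + o(X/\log\log X)$ and $\#\{b \in B : b \le X\} = cX^{1/2} + o(X^{1/2})$, so $d(A) = d(B) = 0$. In Theorem \ref{th-2} the sets $U_1,\dots,U_k$ are allowed to coincide, so it will be enough, taking $U_1 = \dots = U_k = A$ and then $U_1 = \dots = U_k = B$, to produce in each of $A$ and $B$ a single strictly increasing sequence whose ratios of consecutive terms tend to $1$.

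For $A = \{n\omega(n) : n \in \mathbb{N}\}$ I would use the elementary fact that $\omega(p) = 1$ for every prime $p$, so that $p = p\,\omega(p) \in A$ and hence $\mathbb{P} \subseteq A$. With $p_n$ the $n$-th prime, the Prime Number Theorem gives $p_{n-1}/p_n \to 1$, so the sequence $u^{(n)} = p_n$ meets the hypothesis of Theorem \ref{th-2}, and denseness of $\mathcal{D}^k(A)$ in $\mathcal{S}^{k-1}$ follows.

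For $B = \{n\phi(n) : n \in \mathbb{N}\}$ the analogous observation is $\phi(p) = p - 1$, so that $p(p-1) = p\,\phi(p) \in B$. I would take $u^{(n)} := p_n(p_n - 1)$, which is strictly increasing, and note that
\[
\frac{u^{(n-1)}}{u^{(n)}} = \frac{p_{n-1}}{p_n}\cdot\frac{p_{n-1}-1}{p_n-1} \longrightarrow 1
\]
as $n \to \infty$ by the Prime Number Theorem; Theorem \ref{th-2} again gives denseness of $\mathcal{D}^k(B)$ in $\mathcal{S}^{k-1}$.

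I do not expect a genuine obstacle here: the whole content is the observation that the values $\omega(p)=1$ and $\phi(p)=p-1$ force $A$ and $B$ to contain the ``near-geometric'' sequences $\{p_n\}$ and $\{p_n(p_n-1)\}$, even though both sets have zero natural density by Theorem \ref{kfk}. The only input beyond this is $p_{n+1}-p_n = o(p_n)$, for which the Prime Number Theorem (indeed, much weaker bounds on prime gaps) is more than enough.
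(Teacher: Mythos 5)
Your proof is correct, but it takes a genuinely different route from the paper. The paper argues via the counting functions: it uses the asymptotics $\omega_{X}=\frac{X}{\log\log X}+o\!\left(\frac{X}{\log\log X}\right)$ and $\phi_{X}=cX^{1/2}+o(X^{1/2})$ from Theorem \ref{kfk} to show $\lim_{X\to\infty}\omega_{a_iX}/\omega_{b_iX}=a_i/b_i<1$ (resp.\ $\sqrt{a_i/b_i}<1$), so that every interval $(a_iX,b_iX)$ contains an element of $A$ (resp.\ $B$) for large $X$, and then rescales to land in a prescribed basic neighborhood --- essentially the argument of Theorem \ref{TH-1} adapted to sets of zero density. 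You instead bypass Theorem \ref{kfk} entirely by exhibiting explicit subsequences: $\omega(p)=1$ gives $\mathbb{P}\subseteq A$, and $\phi(p)=p-1$ gives $\{p(p-1):p\in\mathbb{P}\}\subseteq B$, both of which have consecutive ratios tending to $1$ by the Prime Number Theorem, so Theorem \ref{th-2} (with all $U_i$ equal, i.e.\ \cite[Theorem 1.5]{BAMS}) applies. Your observation that $d(A)=d(B)=0$, so Theorem \ref{TH-1} is unavailable, is also correct. Your argument is more elementary and self-contained, needing only the trivial containments and standard prime-gap information rather than the nontrivial Gabdullin--Ludelevich asymptotics; the paper's argument, on the other hand, extracts the denseness directly from the counting results it quotes and yields the slightly stronger local information that $A$ and $B$ meet every interval $(aX,bX)$ for all large $X$. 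Both proofs are complete and valid.
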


\section{Proof of Theorems}
In this section, we prove our theorems. We first prove Theorem \ref{THE-BIG-MAIN-TH}.

\begin{proof}[Proof of Theorem \ref{THE-BIG-MAIN-TH}]
Since $X$ is the set of accumulation points of a subset of $\mathcal{S}^{k - 1}$, we immediately conclude that $X$ is closed and (i) is satisfied. 

\smallskip

Now, let $\underline{x}=(x_1,x_2,\dots,x_k)\in X=\mathcal{D}^{\underline{k}}(U_1,\dots,U_k)^{\prime}$. Then there exists a sequence $\rho(\underline{a}^{(n)})\in \mathcal{D}^{\underline{k}}(U_1,\dots,U_k)$ converging to $\underline{x}$ such that $\rho(\underline{a}^{(n)})\neq \underline{x}$ for infinitely many $n$, where $\underline{a}^{(n)} \in \displaystyle\prod_{i = 1}^{k}U_{i}$. For $\pi \in S_{k}$ with $\pi (j) = j$ for all $j \notin \{i_{1},\ldots i_{m}\}$, we consider $\underline{b}^{(n)}:=\pi(\underline{a}^{(n)})\in \mathcal{D}^{\underline{k}}(U_1,\dots,U_k)$. Then $\rho(\underline{b}^{(n)})$ converges to $\pi(\underline{x}).$ Consequently, we have $\pi(\underline{x}) \in X$ for every $\underline{x} \in X$ and thus (ii) is satisfied. 

\smallskip

Now, assume that $I$ is a non-empty subset of  $\{1,\dots,k\}$ that meets $\underline{x}$. We can consider a sub-sequence of $\underline{a}^{(n)}$ such that each $a_i^{(n)}$ is non-decreasing for each $i \in \{1,\ldots,k\}$. If $j\in \{1,\dots,k\}\setminus I$, then we can choose distinct $c_j\in U_j$ such that for sufficiently large positive integer $n_{0}$, a sequence $\underline{d}^{(n)}\in U_1\times\dots\times U_k$ with distinct coordinates can be defined for all $n \geq n_{0}$ with $d_i^{(n)}:=a_i^{(n)}$ for $i\in I$ and $d_i^{(n)}:=c_i$ for $i\notin I$. This choice is possible because of the assumption $|U_{i}| \geq k$ for each $i$. It then follows that $\rho(\underline{d}^{(n)})$ converges to $\rho_I(\underline{x})$. Thus (iii) holds. This completes the proof of Theorem \ref{THE-BIG-MAIN-TH}.
\end{proof}
\begin{proof}[Proof of Theorem \ref{TH-1}]
Let $\underline{x}\in (x_1,\dots, x_k)\in \mathcal{S}^{k-1}$ and let $I_i=(a_i,b_i)$ be open intervals such that $x_{i} \in I_{i}$ for each $i \in \{1,\ldots,k\}$. Then $\displaystyle\prod_{i=1}^{k} (a_i,b_i)\cap \mathcal{S}^{k-1}$ is a basic open set in $\mathcal{S}^{k - 1}$ containing $\underline{x}$. For a real number $X > 1$, let $U_i(X):=\#\{u_i\in U_i| u_i\leq X\}.$ By the hypothesis, we have that $\lim_{X \to \infty}\frac{U_i(X)}{X}=\delta_i>0$. This implies that $ U_i(X)=\delta_iX+o(X)$.
Therefore, \[\lim_{X\rightarrow\infty}\frac{U_i(a_iX)}{U_i(b_iX)}= \lim_{X\rightarrow\infty}\frac{\delta_ia_iX+o(a_iX)}{\delta_ib_iX+o(b_iX)}=\frac{a_i}{b_i}<1.\]
Thus for all sufficiently large real number $X$, there exists $u_i\in U_i$ such that $a_iX<u_i\leq b_iX$. That is, $ a_i<\frac{u_i}{X}\leq b_i.$
Since $\displaystyle\bigcap_{i=1}^{k}U_i$ is an infinite set, we can choose a large enough element $u \in \displaystyle\bigcap_{i=1}^{k}U_i$ such that $a_iu<u_i\leq b_iu$ for all $i=1,\dots,k$. This, in turn, implies that $\frac{u_i}{u}\in (a_i,b_i)$. Using the fact that $\rho(\underline{\alpha})=\frac{\underline{\alpha}}{\lVert \underline{\alpha}\rVert}$ is continuous function, we see that $\rho(u_1,\dots,u_k)\in \displaystyle\prod_{i=1}^kI_i\cap \mathcal{S}^{k-1}$. In other words,  $\mathcal{D}^k(U_1,\dots,U_k)$ is dense in $\mathcal{S}^{k-1}$.
\end{proof}
We next prove Theorem \ref{th-2} which extends Theorem 1.5 of \cite{BAMS}.
\begin{proof}[Proof of Theorem \ref{th-2}]
Let $\underline{x}=(x_1,\dots,x_k)\in \mathcal{S}^{k-1}$ with $x_i>0$ $\forall$ $i\in \{1,\dots,k\}$. We pick an integer $m$ such that $m>\frac{u_i^{(1)}}{\min\{x_1,\dots,x_k\}}$ $\forall$ $i\in \{1,\dots,k\}$. Then there exist integers $m_i$ for each $i\in\{1,\dots,k\}$ such that $u_{i}^{(m_i-1)}\leq mx_i< u_{i}^{(m_i)}$. That is,  $x_i<\frac{u_{i}^{(m_i)}}{m}\leq\frac{u_{i}^{(m_i)}}{u_{i}^{(m_i-1)}}x_i$. Since $m_i\rightarrow\infty$ as $m\rightarrow\infty$, it follows that $\displaystyle\lim_{m\rightarrow\infty}\frac{u_{i}^{(m_i)}}{m}=x_i$. Consequently, $\underline{u}=(u_{1}^{(m_1)},\dots,u_{k}^{(m_k)})$ converges to $\underline{x}$. Since $\rho $ is a continuous map, $\rho(\underline{u})$ converges to $\underline{x}$. Consequently, $\mathcal{D}^k(U_1,\dots,U_k)$ is dense in $\mathcal{S}^{k-1}$.
\end{proof}
\begin{proof}[Proof of Theorem \ref{TH--3}]
For a fixed integer $i \in \{1,\ldots,k\}$, we consider the polynomial $g_{i}(X)$ obtained by replacing all variables of $g_{i}$ by the variable $X$. We get, $g_{i}(X)=a_{d_{i}}X^{d_{i}}+a_{d_{i} - 1}X^{d_{i}-1}+\dots+a_0\in \mathbb{Z}[X]$. Since $a_{d_{i}} > 0$, we conclude that for a sufficiently large positive real number $X$,  we have $g_{i}(X) > 0$. Let $B_{i}:=\{g_{i}(n)|n\in \mathbb{N}\}\cap \mathbb{N}$. We have $\frac{g_{i}(X-1)}{g_{i}(X)}=\frac{a_{d_{i}}(X-1)^{d_{i}}+\dots+a_0}{a_{d_{i}}X^{d_{i}}+\dots+a_o}$ which tends to $1$ as $X$ tends to $\infty$. Also, since $g_{i}(X)$ is a polynomial in one variable, the sequence $\{g_{i}(n)\}_{n = 1}^{\infty}$ is eventually increasing. Therefore, by using Theorem \ref{th-2}, we obtain that $\mathcal{D}^k(B_{i})$ is dense in $\mathcal{S}^{k-1}.$ Since $B_{i} \subseteq U_{i}$, we conclude that $\mathcal{D}^k(U_{1},\ldots,U_{k})$ is dense in $\mathcal{S}^{k-1}$. 
\end{proof}
We now prove Theorem \ref{partition} which gives a partial answer to \cite[Question 1.9]{BAMS}.
\begin{proof}[Proof of Theorem \ref{partition}]
We consider the following three sets as in \cite{Toth-Salat} (see also \cite{gems}).
\begin{align*}A &:= \displaystyle\bigcup_{k = 0}^{\infty} [5^{k},2\cdot 5^{k})\cap \mathbb{N},\\
B &:= \displaystyle\bigcup_{k = 0}^{\infty} [2\cdot 5^{k},3\cdot 5^{k})\cap \mathbb{N},\\
C &:= \displaystyle\bigcup_{k = 0}^{\infty} [3\cdot 5^{k},5\cdot 5^{k})\cap \mathbb{N}.
\end{align*}
If $\mathcal{D}^{k}(A)$, $\mathcal{D}^{k}(B)$ or $\mathcal{D}^{k}(C)$ is dense in $\mathcal{S}^{k - 1}$, then by Theorem 1.4 of \cite{BAMS}, which states that if $\mathcal{D}^{k}(A)$ is dense in $\mathcal{S}^{k - 1}$ for some $A \subseteq \mathbb{N}$, then $\mathcal{D}^{k - 1}(A)$ is dense in $\mathcal{S}^{k - 2}$, we see inductively that $\mathcal{D}^{2}(A)$ (or $\mathcal{D}^{2}(B)$ or $\mathcal{D}^{2}(C)$) is dense in $\mathcal{S}^{1}$, which is false (cf. \cite[Proposition 3]{gems}). Therefore, we get a $3$-partition of $\mathbb{N}$ such that  none of $\mathcal{D}^{k}(A)$, $\mathcal{D}^{k}(B)$ or $\mathcal{D}^{k}(C)$ is dense in $\mathcal{S}^{k - 1}$. This completes the proof of Theorem \ref{partition}. 
\end{proof}
\begin{proof}[Proof of Theorem \ref{prop-2}]
In \cite{darmon}, it has been proven that the equation $x^{n} + y^{n} = 2z^{n}$ has no non-trivial solution in $\mathbb{Z}$ if $n \geq 3$. In other words, the set $A := \{m^{r}:  r, m \in \mathbb{Z}, r \geq 3\}$ does not contain any $3$-term arithmetic progressions. Since for a fixed value of $r \geq 3$, we have $\frac{m^{r}}{(m + 1)^{r}} \to 1$ as $m \to \infty$, by Theorem 1.5 of \cite{BAMS}, we conclude that $\mathcal{D}^{k}(A)$ is dense in $\mathcal{S}^{k - 1}$.
\end{proof}
\begin{proof}[Proof of Theorem \ref{kfk-TH}]
Let $\underline{x} = (x_{1},\ldots,x_{k}) \in \mathcal{S}^{k - 1}$ and let $\displaystyle\prod_{i = 1}^{k}(a_{i},b_{i})$ be a basic neighborhood of $\underline{x}$. Then by Theorem \ref{kfk}, we see that $$\displaystyle\lim_{X \to \infty} \frac{\omega_{a_{i}X}}{\omega_{b_{i}X}} = \displaystyle\lim_{X \to \infty}\frac{a_{i}X}{\log \log a_{i}X}\cdot \frac{\log \log b_{i}X}{b_{i}X} = \frac{a_{i}}{b_{i}} < 1 \mbox{ for all } i \mbox{ with } 1 \leq i \leq k.$$ 
Therefore, for sufficiently large $X$, there exists $\alpha_{i} \in A$ such that $a_{i}X < \alpha_{i} < b_{i}X$ for all $i$. That is, $\left(\frac{\alpha_{1}}{X},\ldots,\frac{\alpha_{k}}{X}\right) \in \displaystyle\prod_{i = 1}^{k}(a_{i},b_{i})$. Hence $\rho(\alpha_{1},\ldots,\alpha_{k}) = \rho\left(\frac{\alpha_{1}}{X},\ldots,\frac{\alpha_{k}}{X}\right) \in \displaystyle\prod_{i = 1}^{k}(a_{i},b_{i})$. Consequently, $\mathcal{D}^{k}(A)$ is dense in $\mathcal{S}^{k - 1}$.

\smallskip

Similarly, for $\mathcal{D}^{k}(B)$, we note that $$\displaystyle\lim_{X \to \infty} \frac{\phi_{a_{i}X}}{\phi_{b_{i}X}} =  \frac{\sqrt{a_{i}}}{\sqrt{b_{i}}} < 1 \mbox{ for all } i \mbox{ with } 1 \leq i \leq k$$ and thereafter it follows a similar line of argument. 
\end{proof}
\section{Concluding remarks : Case of algebraic number fields}
The ratio sets have been studied in the context of algebraic number fields in \cite{CRS}, \cite{dense-Gauss} and \cite{Sittinger}. It is interesting to extend the notion of direction sets in the set up of number fields and formulate analogous questions for the same.

\smallskip

Let $K \subsetneq \mathbb{R}$ be a number field of degree $d \geq 2$ and let $\mathcal{O}_{K}$ be its ring of integers. Let $\mathcal{O}_{K}^{0} := \{\alpha \in \mathcal{O}_{K} : {\rm{Tr}}_{K/\mathbb{Q}}(\alpha) = 0\}$ be the set of elements in $\mathcal{O}_{K}$ with trace $0$. Since $\mathcal{O}_{K}$ is a free $\mathbb{Z}$-module of rank $d$ and ${\rm{Tr}}$ is an additive group homomorphism from $\mathcal{O}_{K}$ to $\mathbb{Z}$, we see that $\mathcal{O}_{K} \cong \mathcal{O}_{K}^{0} \oplus \mathbb{Z}$. In particular, $\mathcal{O}_{K}^{0}$ is a free $\mathbb{Z}$-module of rank $d - 1$. Therefore, $\mathcal{O}_{K}^{0}$ itself is dense in $\mathbb{R}$ whenever $d \geq 3$. Also, for $d = 2$, we see that the ratio set of $\mathcal{O}_{K}^{0}$ is $\mathbb{Q}$. Consequently, the direction set of $\mathcal{O}_{K}^{0}$ is dense in $\mathcal{S}^{k - 1}$ for any integer $k \geq 2$.

\smallskip

We note that $\mathcal{O}_{K}^{0} \cap \mathbb{N} = \emptyset$. In view of this, we ask the following question.

\begin{question}
Let $d \geq 2$ and $k \geq 2$ be integers and let $K$ be a number field of degree $d$. Characterize the sets $\mathcal{A} \subseteq \mathcal{O}_{K}$ such that $\mathcal{A} \cap \mathbb{N}$ is finite and $\mathcal{D}^{k - 1}(\mathcal{A})$ is dense in $\mathcal{S}^{k - 1}$.
\end{question}

\bigskip

{\bf Acknowledgements.} We would like to thank IIT Guwahati for providing excellent facilities to carry out the research. The third author gratefully acknowledges the National Board of Higher Mathematics (NBHM) for the Post-Doctoral Fellowship (Order
No. 0204/16(12)/2020/R \& D-II/10925).

\end{document}